\newtheorem{theorem}{Theorem}[section]
\numberwithin{equation}{section}
\newcommand{\ZZ}{\mathbb{Z}}
\newcommand{\QQ}{\mathbb{Q}}
\def\K{\mathbb{K}}
\def\la{\lambda}
\def\s{\sigma}
\newtheorem{thm}{Theorem}[section]
\newtheorem{defn}[thm]{Definition}
\theoremstyle{remark}
\newcommand\des{\mathop{\rm des}}
\newcommand{\Des}{\operatorname{Des}}
\newcommand{\g}{{\gamma}}
\def\s{ \sigma }
\def\k+1th{(k+1)^{th}}
 \DeclareMathOperator{\fmaj}{fmaj}
\DeclareMathOperator{\col}{col}
\def\fmaj{\mathop{\rm fmaj}\nolimits}
 \def\maj{\mathop{\rm maj}\nolimits}
\def\des{\mathop{\rm des}\nolimits}
\def\il{\bigl]\kern-.55em\bigl]}
\def\ir{\bigr]\kern-.55em\bigr]}
\newcommand*\pFq[5]{{}_{#1}\Phi_{#2}
\biggl(\genfrac..{0pt}{}{#3}{#4};q,#5\biggr)}
\begin{document}

\title{Moments of $q$-Jacobi polynomials and $q$-zeta values}

\author{Fr\'ed\'eric Chapoton}
\address[Fr\'ed\'eric Chapoton]{Institut de Recherche Mathématique Avanc\'ee
Universit\'e de Strasbourg
7 rue Ren\'e Descartes
F-67084 STRASBOURG Cedex
FRANCE}
\email{chapoton@math.unistra.fr}

\author{Christian Krattenthaler$^\dagger$}
\address[Christian Krattenthaler]{Fakult\"at f\"ur Mathematik
Universit\"at Wien
Oskar-Morgenstern-Platz 1
A-1090 Vienna
AUSTRIA}
\email{christian.krattenthaler@univie.ac.at}

\author{Jiang Zeng}
\address[Jiang Zeng]{Institut Camille Jordan, CNRS UMR 5208,
 Université Claude Bernard Lyon~1,
 Université de Lyon, 
    F-69622 Villeurbanne cedex, France}                   
\email{zeng@math.univ-lyon1.fr}

\footnote{Research partially supported by the Austrian
Science Foundation FWF (grant S50-N15)
in the framework of the Special Research Program
``Algorithmic and Enumerative Combinatorics"}

\date{\today}

\begin{abstract}
  We explore some connections between moments of rescaled little
  $q$-Jacobi polynomials, $q$-analogues of values at negative integers
  for some Dirichlet series, and the $q$-Eulerian polynomials of wreath
  products of symmetric groups.
\end{abstract}

\maketitle

\section*{Introduction}

This article is about a connection between three kinds of objects, namely
\begin{itemize}
  \item[(A)] $q$-analogues of Dirichlet series and their values at negative integers,
  \item[(B)] basic hypergeometric polynomials and their sequences of moments,
  \item[(C)] weighted enumeration of elements in coloured symmetric groups.
\end{itemize}

Let us give more details on these three points in order.

The point (A) is about a $q$-analogue of the Dirichlet series
\begin{equation}
  L(s,c,r) = \sum_{\substack{m\geq 1\\m\equiv c\,\, (\text{mod }r)}} \frac{1}{m^s},
\end{equation}
where $c, r$ are fixed integers. This is the Riemann zeta function
when $(c, r)=(1,1)$. For general $c$ and $r$, the summands do not
form a multiplicative sequence, so there is no Euler product. One defines as in ~\cite{chapoton2010fractions} a $q$-analogue of this Dirichlet series as an operator
\begin{equation}
  L_q(s,c,r) = \sum_{\substack{m\geq 1\\m\equiv c\,\, (\text{mod }r)}} \frac{1}{[m]_q^s} F_m,
\end{equation}
where $[m]_q = (q^m-1)/(q-1)$ is the usual $q$-integer and $F_m$ is
the formal Frobenius operator, acting on formal power series in $z$
with no constant term and coefficients in $\QQ(q)$, defined by
\begin{equation*}
  F_{m}(f)(q, z)=f(q^{m}, z^m).
\end{equation*}
Whenever the Dirichlet series $L(s,c,r)$ factorizes as an Euler
product, then so does the operator $L_q(s,c,r)$ as a product of
commuting operators.

One then introduces some $q$-analogues of the values of $L(s,c,r)$ at
non-positive integers, namely $L_q(-n,c,r)(z)$ for $n \geq 0$. As images of the formal power series $f(z)=z$, these are formal power series in
the variable $z$ with coefficients in $\QQ(q)$. As we will see, these
are in fact rational functions in $q$ and $z$.

The point (B) is about the little $q$-Jacobi polynomials, a system of
orthogonal polynomials in one variable. This is one of the families in
the Askey--Wilson scheme of basic hypergeometric orthogonal
polynomials (cf.\ \cite{koekoek2010hypergeometric}). 
The little $q$-Jacobi polynomials, orthogonal with
respect to the variable $x$, depend on the variable $q$ and two
further parameters. For each choice of integers $(c,r)$, by an
appropriate choice of these parameters and some affine change in the
variable $x$, one obtains a system of orthogonal polynomials involving
the variables $q$ and $z$. Their sequence of moments, which are
evaluations of the associated linear functional at the monomials
$x^n$, are therefore rational functions in $q$ and $z$.

The point (C) is about the complex reflection groups $G(r,n)$ defined as the
wreath product of a symmetric group $S_n$ by a cyclic group
$\ZZ_r$. The elements of these groups can be seen as coloured
permutation matrices, where non-zero entries contain a root of unity
of order dividing $r$. By using two combinatorial statistics on these
elements, one can refine the number $r^n n!$ of elements of $G(r,n)$ into
a polynomial in two variables $q$ and $z$, with positive integer
coefficients. In this context, the parameter $c$ is absent.

The aim of this article is to show that (A), (B) and (C) all give
essentially the same rational function in $q$ and $z$. More precisely,
the rational functions from (A) and (B) are essentially the quotients
of the polynomial from (C) by simple denominators. The part~(C) is
involved only when the parameter $c$ equals $1$.

The relationship between (C) and (A) is merely a reformulation of the
results by Biagioli and the third author in~\cite{biagioli2011enumerating}. The
relationship between (A) and (B) is a $(q,z)$-analogue of well-known
results about Bernoulli numbers and Euler numbers, for which we refer
to \cite{dilcher_lin_2}.

\section{Preliminaries}

\subsection{Orthogonal polynomials}

In this subsection we recall some fundamental results of the theory of
orthogonal polynomials~\cite{chihara2011introduction, viennot_notes}. Let $\K$ be a field.

\begin{defn}
  Let $\varphi : \K[x]\to \K$ be a linear functional.  A sequence of
  polynomials $\{p_n(x)\}_{n\geq 0}$ in $\K[x]$ is said to be
  \emph{orthogonal} with respect to the linear functional $\varphi$ if:
  \begin{itemize}
  \item[(i)] $p_n(x)$ is of degree $n$, for $n=0,1,\dots$;
  \item[(ii)] $\varphi(p_n(x)\,p_{n'}(x))=K_n\,\delta_{n,n'}$, $K_n\not=0$,
for $n=0,1,\dots$.
  \end{itemize} The sequence $\{\mu_n\}_{n\geq 0}$ with
  $\mu_n=\varphi(x^n)$ for $n\geq 0$ is called the \emph{moment sequence
    associated} with $\varphi$.
\end{defn}

Sometimes the polynomials $\{p_n(x)\}$ are also  said to be orthogonal with respect to the sequence of moments $\{\mu_n\}_{n\geq 0}$.

Let us write OPS as a shorthand for \textit{orthogonal polynomial system}.

\begin{theorem}[\sc Favard's theorem]\label{favard}
  A sequence of polynomials $\{p_n(x)\}_{n\geq 0}$ in $\K[x]$ is a monic
  OPS if and only if there is  a sequence $\{b_n\}_{n\geq 0}$ and a non-zero
  sequence $\{\la_n\}_{n\geq 0}$ such that $p_0(x)=1$, $p_1(x)=x-b_0$ and
  \begin{equation}\label{three}
    p_{n+1}(x)=(x-b_n)p_n(x)-\la_{n}p_{n-1}(x)\quad \textrm{for}\quad n\geq 1.
  \end{equation}
\end{theorem}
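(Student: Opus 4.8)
The plan is to prove Favard's theorem in both directions, with the forward (necessity) direction being the routine one and the converse (sufficiency) being where the real content lies.

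For the \emph{necessity} direction, I would assume $\{p_n(x)\}_{n\geq 0}$ is a monic OPS with respect to a functional $\varphi$ and derive the three-term recurrence. Since $\deg p_n=n$ and the $p_n$ are monic, the family $\{p_0,\dots,p_{n}\}$ is a basis of the polynomials of degree at most $n$, and $xp_n(x)-p_{n+1}(x)$ has degree at most $n$. Hence I can write
\begin{equation*}
  xp_n(x)=p_{n+1}(x)+\sum_{k=0}^{n}c_{n,k}\,p_k(x)
\end{equation*}
for suitable scalars $c_{n,k}\in\K$. Applying $\varphi(\,\cdot\,p_j(x))$ and using orthogonality $\varphi(p_kp_j)=K_k\delta_{k,j}$ with $K_k\neq0$, I get $c_{n,j}K_j=\varphi\big(xp_n(x)p_j(x)\big)=\varphi\big(p_n(x)\,xp_j(x)\big)$. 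The key observation is that $xp_j(x)$ has degree $j+1$, so this last quantity vanishes whenever $j+1<n$, i.e.\ whenever $j<n-1$. Thus $c_{n,j}=0$ for $j\le n-2$, leaving only the terms $j=n$ and $j=n-1$. Setting $b_n:=c_{n,n}$ and $\la_n:=c_{n,n-1}$ rearranges the expansion into \eqref{three}. To see $\la_n\neq0$, I would compute $\la_n K_{n-1}=\varphi\big(xp_n(x)p_{n-1}(x)\big)=\varphi\big(p_n(x)\,xp_{n-1}(x)\big)$; since $xp_{n-1}(x)=p_n(x)+(\text{lower degree})$, this equals $K_n\neq0$, forcing $\la_n=K_n/K_{n-1}\neq0$.

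For the \emph{sufficiency} direction, I start from the abstract data: a monic sequence $\{p_n(x)\}$ defined by $p_0=1$, $p_1=x-b_0$, and the recurrence \eqref{three} with $\la_n\neq0$, and I must manufacture a functional $\varphi$ making it orthogonal. The natural move is to \emph{define} $\varphi$ on the basis $\{p_n\}$ by $\varphi(p_0)=1$ and $\varphi(p_n)=0$ for $n\geq1$, and then extend $\K$-linearly to all of $\K[x]$ (this is well defined since the monic $p_n$ form a basis). I would then prove, by induction on $\max(m,n)$, the orthogonality relation $\varphi(p_mp_n)=K_n\delta_{m,n}$ with $K_n=\la_1\la_2\cdots\la_n\neq0$. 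The inductive mechanism is to use \eqref{three} to replace, in $\varphi(p_mp_n)$ with $m\le n$, the factor $xp_{m}$ arising from rewriting, or more cleanly to compute $\varphi(p_m\,p_{n+1})=\varphi\big(p_m(x-b_n)p_n\big)-\la_n\varphi(p_mp_{n-1})$ and expand $xp_m$ via \eqref{three} as $p_{m+1}+b_mp_m+\la_mp_{m-1}$; the induction hypothesis then kills all the cross terms except the diagonal one, yielding the recursion $K_{n}=\la_n K_{n-1}$.

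The main obstacle, and the only genuinely delicate point, is the bookkeeping in the sufficiency induction: one must organize the double induction so that every term $\varphi(p_ip_j)$ appearing after expanding $xp_m$ has $\max(i,j)$ strictly smaller than the current $\max(m,n+1)$, so that the induction hypothesis applies and the off-diagonal terms provably cancel. Care is needed at the boundary cases $m=0$, $m=n$, and $m=n-1$, where the shifted indices collide or run off the bottom of the sequence. The hypothesis $\la_n\neq0$ is exactly what guarantees $K_n\neq0$, so that condition (ii) of orthogonality holds with a nonzero constant; condition (i) is immediate since the $p_n$ are monic of degree $n$ by construction.
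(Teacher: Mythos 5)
The paper does not actually prove Theorem~\ref{favard}: it quotes it as a classical result, with references to Chihara's book and Viennot's notes, where the hard (sufficiency) direction is proved by inducting on the monomial exponent, i.e.\ by showing $\varphi(x^k p_n)=0$ for $0\le k<n$ and $\varphi(x^n p_n)=\lambda_1\cdots\lambda_n$. Your necessity direction is correct and standard. The gap is in your sufficiency direction, and it sits exactly at the point you flag as ``delicate bookkeeping'': an induction on $\max(m,n)$ cannot be organized so that all terms drop to a strictly lower level. Concretely, take $m=n$ and try to compute $\varphi(p_np_{n+1})$ by your mechanism: writing $\varphi(p_np_{n+1})=\varphi\bigl(p_n(x-b_n)p_n\bigr)-\lambda_n\varphi(p_np_{n-1})$ and expanding $xp_n=p_{n+1}+b_np_n+\lambda_np_{n-1}$ gives
\begin{equation*}
\varphi(p_np_{n+1})=\varphi(p_{n+1}p_n)+b_n\varphi(p_n^2)+\lambda_n\varphi(p_{n-1}p_n)-b_n\varphi(p_n^2)-\lambda_n\varphi(p_np_{n-1})=\varphi(p_{n+1}p_n),
\end{equation*}
a tautology: the quantity you want reappears at the same level. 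Worse, for the diagonal entry $\varphi(p_{n+1}^2)$ the same expansion produces $\varphi(p_{n+2}p_n)$, whose maximal index $n+2$ exceeds the current level, so the induction hypothesis cannot be invoked there either. These failures are structural, not a matter of bookkeeping: $p_np_{n+1}$ and $p_{n+1}^2$ have degrees $2n+1$ and $2n+2$, so their $\varphi$-values depend on the defining values $\varphi(p_{2n+1})=\varphi(p_{2n+2})=0$, which no pair $\varphi(p_ip_j)$ with $\max(i,j)\le n$ (degree at most $2n$) can encode.

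There are two standard repairs. (a) Follow the classical route: define $\varphi(p_n)=\delta_{n,0}$ as you do, but induct on $k$ to prove $\varphi(x^kp_n)=0$ for all $0\le k<n$, by expanding $x^kp_n=x^{k-1}(p_{n+1}+b_np_n+\lambda_np_{n-1})$ and noting that every resulting term $\varphi(x^{k-1}p_j)$ has $k-1<j$; then $\varphi(x^np_n)=\lambda_n\varphi(x^{n-1}p_{n-1})=\cdots=\lambda_1\cdots\lambda_n\neq0$, and orthogonality $\varphi(p_mp_n)=K_n\delta_{m,n}$ follows since $p_m$ is a linear combination of $1,x,\dots,x^m$. (b) Keep your pair formulation but induct on the sum $s=m+n$ instead of the max: your expansion yields the identity
\begin{equation*}
\varphi(p_mp_{n+1})-\varphi(p_{m+1}p_n)=(b_m-b_n)\varphi(p_mp_n)+\lambda_m\varphi(p_{m-1}p_n)-\lambda_n\varphi(p_mp_{n-1}),
\end{equation*}
which relates neighbouring pairs on the same anti-diagonal $i+j=s$ modulo pairs of strictly smaller sum; starting from the anchor $\varphi(p_0p_s)=\varphi(p_s)=0$ for $s\ge1$ (which comes from the definition of $\varphi$, not from the recurrence) one walks along the anti-diagonal and determines every $\varphi(p_mp_{s-m})$, recovering $0$ off the diagonal and $K_n=\lambda_nK_{n-1}$ on it. Either repair turns your outline into a complete proof; as written, the proposal does not close.
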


\begin{theorem}\label{shift-three-term}
Let the polynomials $(p_n(x))_{n\ge0}$ satisfy \eqref{three}.
Then, if we fix $\mu_0:=\lambda_0\neq 0$, the functional
  $\varphi$ with respect to which this OPS is orthogonal is unique.
Furthermore, for
  $Q_n(x)=\alpha^{-n}p_n(\alpha x+\beta)$, $\alpha\neq 0$, we have
  \begin{align}
    Q_{n+1}(x)=\left(x-\frac{b_n-\beta}{\alpha}\right)Q_{n}(x)-\frac{\lambda_{n}}{\alpha^2}Q_{n-1}(x), \quad \textrm{for}\quad n\geq 1,
  \end{align}
  and, if $(p_n(x))_{n\ge0}$ is  the OPS with respect to the moments $(\mu_n)$, then
  $(Q_n(x))$ is the  OPS with respect to the moments $\nu_n$ given by
  \begin{eqnarray}\label{shift-mom}
    \nu_n=\varphi\left(\left(\frac{x-\beta}{\alpha}\right)^n\right)=\alpha^{-n}\sum_{j=0}^n\binom{n}{j}(-\beta)^{n-j}\mu_j, \quad \textrm{for}\quad n\geq 0.
  \end{eqnarray}
\end{theorem}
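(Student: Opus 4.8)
The plan is to establish the three assertions in order. For the uniqueness of $\varphi$, existence is already guaranteed by Favard's theorem (Theorem~\ref{favard}), so only uniqueness needs an argument. I would fix $\mu_0$ and show that the orthogonality relations force every higher moment. Indeed, since $p_0(x)=1$, orthogonality gives $\varphi(p_n(x))=\varphi(p_n(x)\,p_0(x))=0$ for every $n\geq 1$. Writing $p_n(x)=x^n+c_{n,n-1}x^{n-1}+\cdots+c_{n,0}$, where the coefficients $c_{n,k}$ are completely determined by $\{b_i\}$ and $\{\lambda_i\}$ through \eqref{three}, this relation reads $\mu_n+c_{n,n-1}\mu_{n-1}+\cdots+c_{n,0}\mu_0=0$. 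Because $p_n$ is monic, the coefficient of $\mu_n$ equals $1$, so this expresses $\mu_n$ uniquely in terms of $\mu_0,\dots,\mu_{n-1}$. By induction every $\mu_n$ is determined by $\mu_0$ alone, hence $\varphi$ is unique once $\mu_0:=\lambda_0\neq0$ is prescribed. (Note that $\lambda_0$ does not appear in \eqref{three}; it serves only to normalise $\mu_0=\varphi(1)$.)

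For the recurrence satisfied by $Q_n$, I would simply substitute. Setting $y=\alpha x+\beta$ in \eqref{three} yields
\[
  p_{n+1}(\alpha x+\beta)=(\alpha x+\beta-b_n)\,p_n(\alpha x+\beta)-\lambda_n\,p_{n-1}(\alpha x+\beta),
\]
and multiplying by $\alpha^{-(n+1)}$ and regrouping the scalar factors turns the right-hand side into $\bigl(x-\tfrac{b_n-\beta}{\alpha}\bigr)Q_n(x)-\tfrac{\lambda_n}{\alpha^2}Q_{n-1}(x)$, which is the claimed identity. Since the leading coefficient of $Q_n$ is $\alpha^{-n}\alpha^n=1$, each $Q_n$ is monic of degree $n$, and since $\alpha\neq0$ and $\lambda_n\neq0$ we have $\lambda_n/\alpha^2\neq0$; thus Theorem~\ref{favard} applies and $(Q_n)$ is again a monic OPS.

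To identify the moments, I would exhibit the relevant functional explicitly by pulling $\varphi$ back through the affine change of variable. Define $\psi$ on $\K[x]$ by $\psi(f(x)):=\varphi\bigl(f\bigl(\tfrac{x-\beta}{\alpha}\bigr)\bigr)$, where $\varphi$ acts in the same variable. Since $Q_n\bigl(\tfrac{x-\beta}{\alpha}\bigr)=\alpha^{-n}p_n(x)$, linearity gives $\psi(Q_nQ_m)=\alpha^{-(n+m)}\varphi(p_np_m)=\alpha^{-2n}K_n\,\delta_{n,m}$, which is nonzero for $n=m$; hence $(Q_n)$ is orthogonal with respect to $\psi$. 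Its moments are $\nu_n=\psi(x^n)=\varphi\bigl(\bigl(\tfrac{x-\beta}{\alpha}\bigr)^n\bigr)$, and expanding $(x-\beta)^n$ by the binomial theorem and applying the linearity of $\varphi$ gives exactly $\alpha^{-n}\sum_{j=0}^n\binom{n}{j}(-\beta)^{n-j}\mu_j$. Finally, the uniqueness established in the first part (applied to the recurrence for $Q_n$, with normalisation inherited from $\nu_0=\mu_0$) guarantees that this is indeed the moment sequence.

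The computations in the last two paragraphs are entirely routine; the only step that deserves care is the first one, where one must be explicit that orthogonality against the constant polynomial $p_0$, together with monicity, yields a triangular (unit-diagonal) system for the moments. This is precisely what makes the choice of $\mu_0$ the sole degree of freedom, and it is the conceptual crux of the statement.
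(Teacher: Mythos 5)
Your proof is correct. Note that the paper itself offers no proof of this theorem: it is stated in the preliminaries as a recalled standard fact from the theory of orthogonal polynomials, with the reader referred to the literature (Chihara's book and Viennot's notes). Your argument is precisely the standard one that those sources give: uniqueness follows because the relations $\varphi(p_n)=\varphi(p_n\,p_0)=0$ for $n\geq 1$, together with monicity, form a unit-triangular system determining every $\mu_n$ from $\mu_0$; the recurrence for $Q_n$ is a direct substitution $y=\alpha x+\beta$ with bookkeeping of powers of $\alpha$; and the moment identification follows by pulling $\varphi$ back through the affine map and invoking the uniqueness just proved. All three steps are complete and correct as written, including the observation that $\lambda_0$ plays no role in \eqref{three} and serves only as the normalisation $\mu_0=\varphi(1)$.
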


\begin{theorem}\label{cf-moments}
  The generating function of the moments $\{\varphi(x^n)\}$ has the continued fraction expansion
  \begin{align}\label{eq:cfrac}
    \sum_{n\geq 0}\varphi(x^n)\,t^n=
    \cfrac{\la_0}{1-b_0 t-\cfrac{\la_1 t^2}{1-b_1t-\cfrac{\la_2 t^2}{\ddots}}}.
  \end{align}
\end{theorem}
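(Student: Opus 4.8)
The plan is to use the classical correspondence between a J-fraction and the Padé approximants of its formal power series. Set $F(t)=\sum_{n\ge0}\mu_n t^n$ (the left-hand side of \eqref{eq:cfrac}, with $\mu_n=\varphi(x^n)$), and introduce the reciprocal polynomials $B_n(t)=t^n p_n(1/t)$, where $p_n$ is the monic OPS for $\varphi$ with recurrence \eqref{three}. Substituting $x=1/t$ into \eqref{three} and multiplying by $t^{n+1}$ shows at once that
\[
B_{n+1}(t) = (1-b_n t)\,B_n(t) - \lambda_n t^2\,B_{n-1}(t), \qquad B_0=1,\ B_1=1-b_0 t,
\]
which is exactly the recurrence satisfied by the denominators of the approximants of the continued fraction in \eqref{eq:cfrac}. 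Since $p_n$ is monic of degree $n$, each $B_n$ has constant term $1$, so $1/B_n$ is a well-defined formal power series in $t$.

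The key step will be an orthogonality estimate. Writing $\tfrac{1}{1-xt}=-\tfrac1t\,\tfrac{1}{x-1/t}$ and using the polynomial $q_n(x,y)=(p_n(x)-p_n(y))/(x-y)$, which has degree $n-1$ in $x$ and in $y$, I would compute, for the functional $\varphi$ (unique once $\mu_0=\lambda_0$ is fixed, by Theorem~\ref{shift-three-term}),
\[
B_n(t)\,F(t) = t^n\,\varphi_x\!\left(\frac{p_n(x)}{1-xt}\right) + A_n(t), \qquad A_n(t):=t^{\,n-1}\,\varphi_x\big(q_n(x,1/t)\big),
\]
where $\varphi_x$ indicates that $\varphi$ acts on the variable $x$. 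Here $A_n$ is a polynomial in $t$ of degree $\le n-1$. Expanding the first summand as $t^n\sum_{k\ge0}\varphi(x^k p_n(x))\,t^k$ and invoking orthogonality, $\varphi(x^k p_n)=0$ for $0\le k\le n-1$, shows that this term is $O(t^{2n})$. Hence $B_n(t)F(t)-A_n(t)=O(t^{2n})$, i.e.\ $A_n/B_n$ is the relevant Padé approximant of $F$.

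It then remains to identify $A_n$ with the numerators of the continued fraction. Because the $B_n$ satisfy the denominator recurrence exactly, the $F$-multiples cancel when one forms $R_n:=A_{n+1}-(1-b_nt)A_n+\lambda_n t^2 A_{n-1}$, leaving $R_n=O(t^{2n})$; but the degree bounds $\deg A_m\le m-1$ force $\deg R_n\le n$, and a polynomial of degree $\le n$ vanishing to order $\ge 2n$ is identically zero for every $n\ge1$. A direct check gives $A_0=0$ and $A_1=\varphi_x(1)=\mu_0=\lambda_0$, so the $A_n$ are precisely the numerators of the approximants of \eqref{eq:cfrac}. Finally, dividing $B_nF-A_n=O(t^{2n})$ by $B_n$ (constant term $1$) yields $F(t)-A_n(t)/B_n(t)=O(t^{2n})$, so the approximants converge to $F$ coefficientwise (in the $(t)$-adic sense); since the continued fraction is by definition this limit of approximants, $F$ equals the right-hand side of \eqref{eq:cfrac}.

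I expect the main obstacle to be the bookkeeping in the orthogonality estimate --- correctly isolating $A_n$ as the polynomial part and tracking the exact order of vanishing of the remainder --- together with the degree-versus-order argument that pins down the numerator recurrence. By contrast, the recurrence for the denominators $B_n$ and the concluding convergence argument are routine.
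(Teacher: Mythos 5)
Your proof is correct, but note that the paper itself offers no proof of Theorem~\ref{cf-moments}: it is recalled as a classical fact of the theory of orthogonal polynomials, with the references \cite{chihara2011introduction, viennot_notes} cited at the head of the subsection standing in for a proof. So what you have produced is a self-contained derivation of a result the paper simply quotes. Your route is the classical Pad\'e-approximant argument: the reversed polynomials $B_n(t)=t^np_n(1/t)$ inherit the denominator recurrence from \eqref{three}; the splitting $B_n(t)F(t)=t^n\,\varphi_x\bigl(p_n(x)/(1-xt)\bigr)+A_n(t)$ with $A_n(t)=t^{n-1}\varphi_x\bigl(q_n(x,1/t)\bigr)$, combined with the orthogonality relations $\varphi(x^kp_n(x))=0$ for $k<n$, gives $B_nF-A_n=O(t^{2n})$; the degree-versus-order-of-vanishing argument then forces the $A_n$ to satisfy the numerator recurrence with $A_0=0$ and $A_1=\mu_0=\lambda_0$ (this is where the normalization from Theorem~\ref{shift-three-term} enters), and dividing by the unit $B_n$ yields $t$-adic convergence of the convergents to $\sum_{n\geq 0}\mu_nt^n$. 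I checked the details and they hold: the identity $(p_n(1/t)-p_n(x))/(1-xt)=q_n(x,1/t)/t$, the bound $\deg_tA_n\le n-1$ (since $q_n$ has degree $n-1$ in its second argument), the computation $R_n=-E_{n+1}+(1-b_nt)E_n-\lambda_nt^2E_{n-1}=O(t^{2n})$ with $E_m:=B_mF-A_m$, and the initial values are all right. For comparison, one of the cited sources (Viennot's notes) proves \eqref{eq:cfrac} by a genuinely different, combinatorial method: the moments are interpreted as generating functions of weighted Motzkin paths, and the continued fraction falls out of a path decomposition; that approach gives combinatorial insight and Hankel determinant evaluations for free, whereas your algebraic argument needs no combinatorial model and uses only orthogonality plus formal power series bookkeeping. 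Either one is a legitimate replacement for the citation.
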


There is also an associated formula for Hankel determinants of the
sequence of moments, see \cite{viennot_notes, kratt_det1, kratt_det2}

\subsection{Wreath  product of a symmetric group by a cyclic group}

Let $r \geq 1$ and $n \geq 1$ be integers. Let $S_n$ be the symmetric group on $\{1,\dots,n\}$. A permutation $\s \in S_n$ will be denoted by $\s=\s(1)\cdots \s(n).$ The {\em wreath product} $\ZZ_r \wr S_n$ of
$\mathbb{Z}_r$ by $S_n$ is the set
\begin{equation}\label{def-grn}
G(r,n):=\{(c_1,\ldots,c_n;\sigma) \mid c_i \in \{0,\ldots,r-1\},\sigma \in
S_n\}.
\end{equation}
Using a fixed primitive $r$-th root of unity $\xi$, one can see the elements in this set as square matrices, starting from the permutation matrix for $\sigma$ and replacing the non-zero entry in column $i$ by $\xi^{c_i}$.

This group is therefore also called the {\em group of r-coloured
  permutations}. We will represent its elements as
\begin{equation*}
  \g = [\g(1),\ldots,\g(n)]=[\s(1)^{c_1},\ldots, \s(n)^{c_n}].
\end{equation*}
We denote by
\begin{equation*}
  \col(\g): =\sum_{i=1}^n c_i,
\end{equation*}
the {\em colour weight} of any $\g \in G(r,n)$. For example, if $\g=[4^1, 3^0, 2^4, 1^2] \in G(5,4)$ then $\col(\g)=7$.
\smallskip

We endow the set of possible values for the $\g(i)$ with the following total order:
\begin{equation*}
   n^{r-1} < \dots < n^1 < \dots < 1^{r-1} < \dots < 1^{1} < 0 < 1^0 < \dots < n^0.
\end{equation*}
The $0$ is inserted here to separate the ``positive" values $i^0$ from 
the ``negative" values $i^c$ with $c\ge1$. It will also be used in
the statistics that we are going to define now.

The {\em descent set} of $\g \in G(r,n)$ is defined by 
\begin{eqnarray}\label{des}
\Des_G(\g)
&:=&\{i \in \{0,\ldots,n-1\} \mid \g(i) > \g(i+1)\},
\end{eqnarray}
where  $\g(0):=0$, and its cardinality is denoted by $\des_G(\g)$.
\smallskip

The {\em major index}  is defined  to be the sum of descent positions:
$$ \maj(\g)=\sum_{i\in \Des_G(\g)}i,$$
and the \emph{flag-major index} is defined by $$\fmaj(\g):=r \cdot \maj(\g)+ \col(\g).$$
\smallskip
For example, for  $\g=[4^1, 3^0, 2^4, 1^2] \in G(5,4)$ we have $\Des_{G}(\g)=\{0,2\}$, $\des_{G}(\g)=2$, $\maj(\g)=2$, and $\fmaj(\g)=17$.

Biagioli and the third author~\cite{biagioli2011enumerating} defined 
the generating polynomials for $G(r,n)$ with respect to
the bi-statistic  $(\des, \fmaj)$:
\begin{equation}
G_{r,n}(Z,q)=\sum_{\g\in G(r,n)} Z^{\des_G(\g)}q^{\fmaj(\g)},
\end{equation}
and they proved the following identity.
We refer the reader to Subsection~\ref{sec:1.3} for the meaning of the
$q$-notations.

\begin{thm}[\sc Carlitz--MacMahon identity for $G(r,n)$]\label{e:ca}
Let $r$ and $ n $ be positive integers. Then
\begin{align}\label{eq:carlitzG}
\frac{G_{r,n}(Z,q)}{(Z;q^r)_{n+1}}=
\sum_{k\geq 0}Z^k [r  k +1]_q^n.
\end{align}
\end{thm}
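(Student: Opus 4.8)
The plan is to clear the denominator using the $q$-binomial theorem, reduce to a single coefficient identity, and then prove that identity by a colored sorting bijection of Worpitzky--MacMahon type.

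First I would rewrite the right-hand factor. By the finite $q$-binomial theorem in base $q^r$,
\[
\frac{1}{(Z;q^r)_{n+1}}=\prod_{i=0}^{n}\frac{1}{1-Zq^{ri}}=\sum_{j\ge0}\qbinom{n+j}{j}_{q^r}Z^j .
\]
Since $G_{r,n}(Z,q)$ and $\sum_{k\ge0}Z^k[rk+1]_q^n$ are both power series in $Z$ with coefficients in $\QQ(q)$, multiplying \eqref{eq:carlitzG} by $(Z;q^r)_{n+1}$ and comparing the coefficient of $Z^k$ shows that \eqref{eq:carlitzG} is equivalent to the colored Worpitzky identity
\[
[rk+1]_q^n=\sum_{\g\in G(r,n)}q^{\fmaj(\g)}\qbinom{n+k-\des_G(\g)}{n}_{q^r}\qquad(k\ge0),
\]
where the terms with $\des_G(\g)>k$ vanish because the top index of the $q$-binomial drops below $n$. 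This reduction is routine; all the content lies in the displayed identity.

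Next I would interpret both sides combinatorially. The left-hand side is $\sum_{\mathbf a}q^{a_1+\cdots+a_n}$ taken over words $\mathbf a=(a_1,\dots,a_n)\in\{0,1,\dots,rk\}^n$, while $\qbinom{n+k-\des_G(\g)}{n}_{q^r}$ is the generating function $\sum_\mu q^{r|\mu|}$ for partitions $\mu=(\mu_1\ge\cdots\ge\mu_n\ge0)$ with $\mu_1\le k-\des_G(\g)$. I would then construct a weight-preserving bijection between such words $\mathbf a$ and pairs $(\g,\mu)$ with $a_1+\cdots+a_n=\fmaj(\g)+r|\mu|$. Starting from $\mathbf a$: read off the colors $c_i$ from the residues of the $a_i$ modulo $r$; sort the ``levels'' $\lfloor a_i/r\rfloor$ according to the total order on colored letters, breaking ties by index, to obtain $\sigma$ and hence $\g$; and subtract the descent staircase to produce $\mu$. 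Here $r\cdot\maj(\g)$ is precisely the staircase needed to convert weak inequalities into the strict jumps forced at the descents of $\g$, the residues reassemble into $\col(\g)$, and the bound $\mu_1\le k-\des_G(\g)$ is exactly the condition that every entry stays $\le rk$.

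The main obstacle is the bookkeeping of the colored statistics, which is genuinely more delicate than the classical $S_n$ case. In particular, the descent at position $0$, governed by the separator $\g(0)=0$ and triggered exactly when the leading letter is negatively colored ($c_1\ge1$), must be treated carefully: it contributes to $\des_G(\g)$ (and thus shrinks the admissible box for $\mu$, matching the power of $Z$) while contributing $0$ to $\maj(\g)$, so one must verify that the $q$-weight it carries is absorbed entirely into $\col(\g)$ rather than into the staircase, and one must also pin down the correct sign convention in the residue-to-color assignment. Checking that $\des_G$, the staircase realizing $\maj$, and the residues realizing $\col$ recombine to give exactly $\fmaj=r\,\maj+\col$ with the correct box constraint, and that the construction is invertible, is the crux. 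Once the coefficient identity is established for each fixed $k$, multiplying by $Z^k$ and summing recovers \eqref{eq:carlitzG}.
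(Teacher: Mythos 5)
There is a genuine gap. Everything you carry out in detail---the expansion of $1/(Z;q^r)_{n+1}$, the reduction to the coefficient identity
$[rk+1]_q^n=\sum_{\g\in G(r,n)}q^{\fmaj(\g)}\qbinom{n+k-\des_G(\g)}{n}_{q^r}$,
and the partition interpretation of the $q$-binomial---is correct but routine, exactly as you say. The entire content of the theorem lies in the sorting bijection, which you only sketch and whose key verifications (weight preservation, the box constraint, invertibility) you explicitly defer as ``the crux.'' Note that the paper itself contains no proof of this theorem: it is quoted from Biagioli and Zeng \cite{biagioli2011enumerating}, where it is established by Garsia--Gessel-type bijections applied to the whole generating series rather than coefficient-wise; so in your plan the full burden of proof rests precisely on the step you leave unchecked.

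Moreover, the one precise rule you do state is wrong: breaking level ties \emph{by index} fails. Take $r=2$, $n=2$, $k\geq 2$ and the word $\mathbf a=(2,3)$, so the levels are $(1,1)$ and the colored letters are $1^0$ and $2^1$. Tie-breaking by index yields $\g=[1^0,2^1]$, which has $\Des_G(\g)=\{1\}$; subtracting the descent staircase from the levels gives $(1-1,\,1-0)=(0,1)$, which is not weakly decreasing, so the map does not land in the claimed set of pairs $(\g,\mu)$. The correct rule is to arrange letters of equal level in increasing colored order: here $2^1<0<1^0$, giving $\g=[2^1,1^0]$ with $\Des_G(\g)=\{0\}$, $\fmaj(\g)=1$, $\mu=(1,1)$, and indeed $2+3=1+2\cdot 2$. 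With that rule, descents of $\g$ at positions $\geq 1$ occur only between distinct levels, so the staircase subtraction produces a genuine partition in the box $n\times(k-\des_G(\g))$; and the descent at position $0$ (present exactly when the top-level block contains a negatively colored letter) accounts for the shrunken box, since a letter of level $b$ and color $c\geq 1$ satisfies $rb+c\leq rk$ only if $b\leq k-1$. Until the bijection is pinned down in this form and its inverse written out and checked, your proposal is a plan rather than a proof.
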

The above formula gives a nice generalization of identities of 
Carlitz~\cite{carlitz1975combinatorial} for the symmetric group
(corresponding to the case where $r=1$), and of Chow and Gessel~\cite{chow2007descent} for the hyperoctahedral group (corresponding to the case where $r=2$).  

\subsection{Little $q$-Jacobi polynomials}
\label{sec:1.3}

We use the standard $q$-notations from \cite{koekoek2010hypergeometric}, among which
\begin{equation*}
[x]_q=\frac{1-q^x}{1-q},
\end{equation*}
the $q$-Pochhammer symbol
\begin{equation*}
  (a ; q)_n = (1 - a)(1- aq)\ldots(1-a q^{n-1}),
\end{equation*}
and the convenient shorthand
\begin{equation*}
  (a, b ; q)_n = (a ; q)_n\, (b ; q)_n.
\end{equation*}
We furthermore
need the $q$-binomial theorem~\cite[p.~16]{koekoek2010hypergeometric}
\begin{align}
  \label{q-binomial}
  {}_1\Phi_0(a; -; q, z)=\sum_{k=0}^\infty 
  \frac{(a;q)_k}{(q;q)_k}z^k=\frac{(az;q)_\infty}{(z;q)_\infty}
\end{align}
and the $q$-Chu--Vandermonde formula~\cite[p.~17]{koekoek2010hypergeometric}
\begin{align}
  \label{q-chu}
  {}_2\Phi_1(q^{-n}, b; c; q, q)
=\sum_{k=0}^\infty 
  \frac{(q^{-n};q)_k\,(b;q)_k}{(c;q)_k\,(q;q)_k}q^k
=\frac{(c/b;q)_n}{(c;q)_n}b^n.
\end{align}

The little $q$-Jacobi 
polynomials~\cite[p.~482]{koekoek2010hypergeometric} have the explicit representation
\begin{equation}\label{little jacobi}
  p_n(x;a,b\mid q) = \pFq{2}{1}{q^{-n},abq^{n+1}}{aq}{qx}
=
\sum_{k=0}^\infty 
  \frac{(q^{-n};q)_k\,(abq^{n+1};q)_k}{(aq;q)_k\,(q;q)_k}(qx)^k,
\end{equation}

and 
are 
orthogonal with respect to
the inner product  defined by
\begin{equation*}
\int_0^1f(x)g(x)d_qw(x)=\sum_{k=0}^\infty f(q^k)g(q^k) w(q^k),
\end{equation*}
where 
\begin{equation*}
w(x)= \frac{(aq,bq;q)_\infty}{(abq^2,q;q)_\infty}\cdot  \frac{(qx;q)_\infty}{(bqx;q)_\infty}
x^{\alpha+1}
\end{equation*}
with $a=q^\alpha$.

Let $p_n(x)$ be the monic little $q$-Jacobi polynomials, i.e., 
\begin{equation*}
  p_n(x;a,b\mid q) =\frac{(-1)^n q^{-\binom{n}{2}} (abq^{n+1}; q)_n}{(aq;q)_n} p_n(x).
\end{equation*}
Then the normalized  recurrence relation~\cite[p.~483]{koekoek2010hypergeometric} reads
\begin{align}\label{monic-three-term-monic-jacobi}
  xp_n(x)=p_{n+1}(x)+(A_n+C_n) p_n(x)+A_{n-1}C_n p_{n-1}(x),
\end{align}
where
\begin{align*}
  A_n&=q^n \frac{(1-aq^{n+1})(1-abq^{n+1})}{(1-abq^{2n+1})(1-abq^{2n+2})},\\
  C_n&=aq^n \frac{(1-q^n)(1-bq^{n})}{(1-abq^{2n})(1-abq^{2n+1})}.
\end{align*}
By the $q$-binomial theorem~\eqref{q-binomial}, the $n$th moment is
\begin{equation}\label{mom-jacobi}
  \mu_n=\int_0^1x^n d_qw(x)=\frac{(aq;q)_n}{(abq^2; q)_n},\quad 
\text{for }n=0, 1, 2, \ldots.
\end{equation}
We can also verify \eqref{mom-jacobi}  by using the explicit formula \eqref{little jacobi} and the $q$-Chu-Vandermonde formula~\eqref{q-chu}: 
namely, for $n\geq 1$, we have
\begin{equation}
\int_0^1  p_n(x;a,b\mid q) d_qw(x)=0.
\end{equation}

We can now prove the connection between (B) and (A).
\begin{theorem}
  \label{moments-shift-little-jacobi}
  For integers $r\geq 1$,
  the $n$th moment $\mu_n$ of the shifted little $q$-Jacobi polynomials  
  $p_n(q^{-c}(1+(q-1)x) ; Zq^{-r}, 1\mid q^r)$ is 
  \begin{align}\label{r-moments}
    \mu_n=(1-Z)\sum_{k\geq 0}([rk+c]_q)^nZ^k.
  \end{align}
  For $c=1$, we have
  \begin{equation}
    \mu_n = \frac{G_{r,n}(Z,q)}{(Z q^r;q^r)_n}.
  \end{equation}
\end{theorem}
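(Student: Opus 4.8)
The plan is to reduce everything to the known moment formula~\eqref{mom-jacobi} by first performing the base substitution $q\mapsto q^r$ and then tracking the affine change of variable through the moment-transformation formula~\eqref{shift-mom}. First I would substitute $q\mapsto q^r$ in~\eqref{mom-jacobi} and set $a=Zq^{-r}$, $b=1$. Since then $aq^r=Z$ and $abq^{2r}=Zq^r$, the $n$th moment of the un-shifted polynomials $p_n(u;Zq^{-r},1\mid q^r)$ becomes $\frac{(Z;q^r)_n}{(Zq^r;q^r)_n}$, which telescopes to $\frac{1-Z}{1-Zq^{rn}}$. Expanding this as a geometric series gives
$$\frac{1-Z}{1-Zq^{rn}}=(1-Z)\sum_{k\ge0}Z^k\,(q^{rk})^n,$$
so the moment functional $\varphi$ of this OPS is represented by the discrete measure assigning mass $(1-Z)Z^k$ to the point $u=q^{rk}$. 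Since a linear functional on $\K[u]$ is determined by its moment sequence, this representation is exact on all polynomials.

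Next I would read off the affine change: the shifted argument $q^{-c}(1+(q-1)x)$ equals $\alpha x+\beta$ with $\alpha=q^{-c}(q-1)$ and $\beta=q^{-c}$, so Theorem~\ref{shift-three-term} applies and the shifted moments are $\mu_n=\varphi\big((\tfrac{u-\beta}{\alpha})^n\big)$. The crucial observation is that the affine map is engineered precisely so that the support points are carried to $q$-integers: at $u=q^{rk}$ one computes
$$\frac{u-\beta}{\alpha}=\frac{q^{rk}-q^{-c}}{q^{-c}(q-1)}=\frac{q^{rk+c}-1}{q-1}=[rk+c]_q.$$
Feeding this into the discrete representation of $\varphi$ then yields $\mu_n=(1-Z)\sum_{k\ge0}Z^k\,([rk+c]_q)^n$, which is exactly~\eqref{r-moments}.

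Finally, for $c=1$ I would invoke the Carlitz--MacMahon identity~\eqref{eq:carlitzG}, which rewrites $\sum_{k\ge0}Z^k[rk+1]_q^n$ as $G_{r,n}(Z,q)/(Z;q^r)_{n+1}$; combining this with the factorization $(Z;q^r)_{n+1}=(1-Z)(Zq^r;q^r)_n$ cancels the prefactor $(1-Z)$ and gives $\mu_n=G_{r,n}(Z,q)/(Zq^r;q^r)_n$. The computation is essentially bookkeeping; the one point that requires care---and the only real obstacle---is justifying the passage to the discrete measure, i.e.\ that matching the moment sequence legitimately lets one evaluate $\varphi$ on the polynomial $(\tfrac{u-\beta}{\alpha})^n$ termwise over the support $\{q^{rk}\}$. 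This is precisely what the uniqueness of the functional in Theorem~\ref{shift-three-term} guarantees.
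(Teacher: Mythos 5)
Your proposal is correct, and it reaches the paper's formula by a route that differs in its key middle step. Both you and the paper reduce to the little $q$-Jacobi moment formula \eqref{mom-jacobi} with base $q^r$ and parameters $a=Zq^{-r}$, $b=1$, both invoke Theorem~\ref{shift-three-term} for the affine shift $\alpha=(q-1)q^{-c}$, $\beta=q^{-c}$, and both finish the $c=1$ case with \eqref{eq:carlitzG} and the factorization $(Z;q^r)_{n+1}=(1-Z)(Zq^r;q^r)_n$. The difference is how the shifted moments are computed. The paper uses the explicit binomial-sum side of \eqref{shift-mom}, substitutes $\frac{(Z;q^r)_j}{(Zq^r;q^r)_j}=\frac{1-Z}{1-Zq^{rj}}$, expands the geometric series, interchanges the two sums, and then collapses the inner sum by the binomial theorem into $\bigl(\frac{q^{rk+c}-1}{q-1}\bigr)^n=[rk+c]_q^n$. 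You instead use the functional-evaluation side $\mu_n=\varphi\bigl(\bigl(\frac{u-\beta}{\alpha}\bigr)^n\bigr)$, represent $\varphi$ as the discrete measure with mass $(1-Z)Z^k$ at $u=q^{rk}$, and observe that the affine map carries the support points to the $q$-integers, $\frac{q^{rk}-q^{-c}}{q^{-c}(q-1)}=[rk+c]_q$. This is mathematically equivalent (your pointwise evaluation is exactly where the paper's binomial-theorem step is hiding), but it is more conceptual: it explains \emph{why} $q$-integers appear, namely because the shift is engineered to transport the Jackson-type support $\{q^{rk}\}$ onto $\{[rk+c]_q\}$, whereas the paper's proof is a direct algebraic verification. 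One small imprecision: the step you flag as the ``only real obstacle'' needs only linearity --- the functional defined by the discrete measure agrees with $\varphi$ on all monomials, hence on all of $\K[u]$ since monomials form a basis (for a fixed polynomial the interchange with the geometric series is a finite rearrangement); the uniqueness clause of Theorem~\ref{shift-three-term} concerns recovering $\varphi$ from the three-term recurrence and is not what does the work here.
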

\begin{proof}
  By \eqref{shift-mom},  the $n$th moment of $p_n(q^{-c}(1+(q-1)x); a, b\mid q^r)$ is
  \begin{equation*}
    \nu_n=q^{nc}(q-1)^{-n}\sum_{j=0}^n \binom{n}{j} (-q^c)^{j-n} \frac{(aq^r;q^r)_j}{(abq^{2r}; q^r)_j}.
  \end{equation*}
  Substituting $a$ by $Z q^{-r}$ and  $b$ by $1$, we get
  \begin{align*}
    \nu_n&=q^{nc} (q-1)^{-n}\sum_{j=0}^n \binom{n}{j} (-q^c)^{j-n} \frac{1-Z}{1-Z q^{rj}}\\
         &=(1-Z)(q-1)^{-n}\sum_{k\geq 0} Z^k\sum_{j=0}^n \binom{n}{j} (-1)^{n-j}   q^{(rk+c)j}\\
         &=(1-Z)\sum_{k\geq 0}([rk+c]_q)^nZ^k.
  \end{align*}
  The last statement follows from \eqref{eq:carlitzG}.
\end{proof}


\begin{theorem}
The generating function for the moments $\mu_n$ in \eqref{r-moments} has the continued fraction expansion
\begin{align}\label{eq:cfrac2}
\sum_{n\geq 0}\mu_n t^n=
\cfrac{1}{1-b_0 t-\cfrac{\la_1 t^2}{1-b_1t-\cfrac{\la_2 t^2}{\ddots}}}.
\end{align}
where the coefficients $b_n$ and $\la_n$ are given by 
\begin{equation}
  \la_n= \frac{Z q^{2r(n-1)+2 c}\,[rn]_q^2\, (1-Z q^{r(n-1)})^2}{(1-Z q^{2rn})(1-Z q^{r(2n-1)})^2(1-Z q^{r(2n-2)})}
\end{equation}
and 
\begin{equation}
  b_n= \frac{q^c}{q-1}
  \left(\frac{q^{rn}(1-Z q^{rn})^2}{(1-Z q^{2rn})(1-Z q^{r(2n+1)})}+\frac{Z q^{r(n-1)}(1-q^{rn})^2}{(1-Z q^{r(2n-1)})(1-Z q^{2rn})}-q^{-c}\right).
\end{equation}
\end{theorem}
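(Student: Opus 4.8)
The plan is to obtain $b_n$ and $\la_n$ directly from Theorem~\ref{cf-moments}, for which I only need the three-term recurrence of the shifted monic polynomials. Since the relevant polynomials are $p_n(q^{-c}(1+(q-1)x);Zq^{-r},1\mid q^r)$, I would combine the normalized recurrence \eqref{monic-three-term-monic-jacobi} for the monic little $q$-Jacobi polynomials (with the base $q$ replaced by $q^r$ and the parameter choices $a=Zq^{-r}$, $b=1$) with the affine-transformation rule of Theorem~\ref{shift-three-term}.

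First I would fix the shift. Writing $q^{-c}(1+(q-1)x)=\alpha x+\beta$ identifies $\alpha=q^{-c}(q-1)$ and $\beta=q^{-c}$. Reading \eqref{monic-three-term-monic-jacobi} in the form of Favard's recurrence \eqref{three} gives the little $q$-Jacobi coefficients $b_n^{J}=A_n+C_n$ and $\la_n^{J}=A_{n-1}C_n$, and Theorem~\ref{shift-three-term} then yields the coefficients of the shifted system,
\[
  b_n=\frac{b_n^{J}-\beta}{\alpha}=\frac{q^c}{q-1}\bigl(A_n+C_n-q^{-c}\bigr),\qquad \la_n=\frac{\la_n^{J}}{\alpha^2}=\frac{q^{2c}}{(q-1)^2}\,A_{n-1}C_n.
\]

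Then comes the substitution. Replacing the base $q$ by $q^r$ in the given expressions for $A_n$ and $C_n$ and setting $a=Zq^{-r}$, $b=1$ makes every factor $abq^{j}$ collapse to a power of $Z$; a short simplification gives
\[
  A_n=q^{rn}\,\frac{(1-Zq^{rn})^2}{(1-Zq^{2rn})(1-Zq^{r(2n+1)})},\qquad C_n=Zq^{r(n-1)}\,\frac{(1-q^{rn})^2}{(1-Zq^{r(2n-1)})(1-Zq^{2rn})}.
\]
Inserting these into the formula for $b_n$ above reproduces the claimed expression verbatim. For $\la_n$ I would form $A_{n-1}C_n$ (substituting $n\to n-1$ in $A_n$), gather the $q$-powers into $Zq^{2r(n-1)}$, and use the identity $(1-q^{rn})^2/(q-1)^2=[rn]_q^2$ to convert the prefactor $q^{2c}Zq^{2r(n-1)}(1-q^{rn})^2/(q-1)^2$ into $Zq^{2r(n-1)+2c}[rn]_q^2$; the surviving factors assemble into the numerator $(1-Zq^{r(n-1)})^2$ and the denominator $(1-Zq^{2rn})(1-Zq^{r(2n-1)})^2(1-Zq^{r(2n-2)})$, exactly as stated.

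It remains to confirm the normalization. Theorem~\ref{cf-moments} produces $\la_0$ as the numerator of the continued fraction; here $\mu_0=(1-Z)\sum_{k\ge0}Z^k=1$, so the leading numerator is $1$, matching \eqref{eq:cfrac2}. The only genuine labour is the algebraic bookkeeping in the substitution step — tracking the replacement $q\mapsto q^r$ and the collapse of the $abq^{j}$ factors into powers of $Z$ — and this is routine simplification of rational functions in $q$ and $Z$, with no conceptual obstacle.
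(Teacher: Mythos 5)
Your proposal is correct and follows exactly the paper's own route: the paper's proof is precisely the combination of \eqref{monic-three-term-monic-jacobi} with Theorems~\ref{shift-three-term} and \ref{cf-moments}, using the same shift $\alpha=(q-1)/q^c$, $\beta=q^{-c}$ and the same substitutions $q\mapsto q^r$, $a=Zq^{-r}$, $b=1$. You have merely carried out explicitly the algebraic simplifications (the collapse of the $abq^j$ factors, the identity $(1-q^{rn})^2/(q-1)^2=[rn]_q^2$, and the normalization $\la_0=\mu_0=1$) that the paper leaves to the reader.
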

\begin{proof}
This follows by combining \eqref{monic-three-term-monic-jacobi} and  Theorems~\ref{moments-shift-little-jacobi}, \ref{shift-three-term} and \ref{cf-moments} with $\alpha=(q-1)/q^c$ and $\beta=q^{-c}$.
\end{proof}



\section{Zeta operators at negative integers}
\label{sec:2}

We define the $q$-difference operator on the formal power series $f$
in $z$ by
\begin{equation}
  \Delta_z(f) = \frac{f(q z) - f(z)}{q - 1}.
\end{equation}
Note that $\Delta_{z}(z^m)=[m]_q z^m$. This implies that 
repeated application of the
operator $\Delta_z$ creates the sequence of values at negative integers for
the $q$-analogues of Dirichlet series. Indeed, for $n \geq 0$, we have
\begin{equation}
  \label{value_L_minusn}
  L_q(-n,c,r)(z) =  \sum_{\substack{m\geq 1\\m\equiv\,\, c (\text{mod }r)}} [m]_q^{n} z^m.
\end{equation}
and therefore
\begin{equation}
  \label{Delta_on_L_values}
  \Delta_z \left(L_q(-n,c,r)(z)\right) = L_q(-n-1,c,r)(z).
\end{equation}

Computing the initial value for $n=0$, one finds
\begin{equation}
  \label{value_L_zero}
  L_q(0,c,r)(z) = \frac{z^c}{1-z^r}.
\end{equation}
By induction using \eqref{Delta_on_L_values}, the expression $L_q(-n,c,r)(z)$ is a rational function
in $q$ and $z$ with denominator $(z^r, q^r)_{n+1}$.

The general relation between (A) and (B) is therefore, by comparison
of \eqref{value_L_minusn} with \eqref{r-moments}, using \eqref{value_L_zero}, that
\begin{equation}
  \label{connexion_AB}
   L_q(-n,c,r)(z) / L_q(0,c,r)(z) = \mu_n\big| _{Z=z^r}.
\end{equation}

For $c=1$, comparison with \eqref{eq:carlitzG} reveals the
combinatorial expression
\begin{equation}
  L_q(-n,c,r)(z) = z \frac{G_{r,n}(z^r,q)}{(z^r,q^r)_{n+1}},  
\end{equation}
which makes the precise connection between (A) and (C).

\bibliographystyle{plain}
\bibliography{Chapo-Kratt-Zeng}
\end{document}